\newcommand{\p}{\varphi}
\newcommand{\cB}{{\mathcal B}}
\newcommand{\cU}{{\mathcal U}}
\newcommand{\pt}{\mathbin{\hat\otimes}}
\DeclareMathOperator{\id}{id}
\newtheorem*{thm}{Theorem}
\newtheorem*{prop}{Proposition}
\newtheorem*{lem}{Lemma}
\newtheorem*{cor}{Corollary}
\title{A remark on contractible Banach algebras}
\author{Narutaka Ozawa}
\address{RIMS, Kyoto University, \mbox{606-8502} Japan}
\email{narutaka@kurims.kyoto-u.ac.jp}
\subjclass{Primary 46H20; Secondary 47L10}
\keywords{Super-amenable Banach algebra, contractible Banach algebra}
\date{27 October 2011}
\begin{document}
\begin{abstract}
It is a longstanding problem whether every contractible Banach algebra
is necessarily finite-dimensional. In this note, we confirm this for
Banach algebras acting on Banach spaces with the uniform approximation property.
This generalizes a result of Paulsen and Smith
who proved the same for Banach algebras acting on Hilbert spaces.
\end{abstract}

\maketitle

A unital Banach algebra $A$ is said to be \emph{contractible} or \emph{super-amenable}
if $H^1(A,V)=0$ for every Banach $A$-bimodule $V$; or equivalently, if $A$ has
a diagonal. A \emph{diagonal} is an element $\sum_{i=1}^\infty c_i\otimes d_i$
in the projective tensor product $A \pt A$ such that
\[
\sum_{i=1}^\infty \|c_i\|\|d_i\|<+\infty,\
\sum_{i=1}^\infty ac_i\otimes d_i=\sum_{i=1}^\infty c_i\otimes d_ia
\mbox{ for every $a\in A$, and }\sum_{i=1}^\infty c_i d_i=1.
\]
It is a longstanding problem whether every contractible Banach algebra
is finite-dimensional (and hence is a finite direct sum of full matrix algebras).
See Section~4.1 in~\cite{runde}.

For a Banach algebra $A$, we define the ideal of
``compact right multipliers'' as
\[
K_r(A)=\{ x\in A : \mbox{the operator $A\ni a\mapsto ax\in A$ is compact}\}.
\]
Observe that $K_r(A)$ is indeed a closed two-sided ideal of $A$.
Likewise, one considers the ideal $K_l(A)$ of compact left multipliers,
which need not be the same as $K_r(A)$.
A unital Banach algebra $A$ is finite-dimensional if and only if $K_r(A)=A$.
For example, if $A=c$ is the Banach algebra of convergent sequences,
then $K_r(A)=c_0$ and $K_r(A)$ has codimension $1$ in $A$.
This phenomenon does not occur in case $A$ is contractible.
Indeed, this follows from the following standard fact, which we put a proof for
the reader's convenience.
\begin{lem}
Let $A$ be a contractible Banach algebra, $V$ a left $A$-module\footnote{
All modules in this note are Banach modules.} and $W\subset V$
an $A$-submodule. If $W$ is complemented\footnote{
Complemented $=$ a direct summand $=$ the range of a bounded linear projection.}
in $V$ as a Banach space, then it is complemented as an $A$-module.
If a closed left ideal $L$ of $A$ is complemented in $A$
(e.g., if $A/L$ is finite-dimensional), then
there is an idempotent $e$ such that $L=Ae$.
If $L$ is moreover a two-sided ideal, then the idempotent $e$ is central.
\end{lem}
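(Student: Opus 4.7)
The plan is to fix a diagonal $m = \sum_i c_i \otimes d_i \in A \pt A$ and derive all three claims from a single averaging construction.

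\emph{Part one.} Given a bounded Banach-space projection $P \colon V \to W$, I would define
\[
Q(v) \;:=\; \sum_{i} c_i\, P(d_i v).
\]
Absolute convergence is immediate from $\sum \|c_i\|\|d_i\| < \infty$, and $Q$ takes values in $W$ because $W$ is a left $A$-submodule. For $w \in W$ one has $d_i w \in W$ and hence $P(d_i w) = d_i w$, so $Q(w) = \bigl(\sum_i c_i d_i\bigr) w = w$, i.e.\ $Q|_W = \id_W$. For $A$-linearity I would view $(c,d) \mapsto c P(dv)$ as a bounded bilinear map $A \times A \to W$, extend it to a bounded linear map on $A \pt A$, and apply it to the diagonal identity $\sum_i a c_i \otimes d_i = \sum_i c_i \otimes d_i a$; this yields $aQ(v) = Q(av)$. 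This linearity step is the only delicate point, since the diagonal identity lives in the completed projective tensor product and so cannot be manipulated termwise; one must phrase it as evaluating a single bounded functional on $A \pt A$ at two equal elements.

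\emph{Parts two and three.} Taking $V = A$ and $W = L$ in part one produces a left $A$-linear projection $Q \colon A \to L$. Setting $e := Q(1) \in L$, left $A$-linearity forces $Q(a) = a Q(1) = ae$, so $L = \operatorname{Im} Q = Ae$, and $e^{2} = Q(e) = e$. When $L$ is two-sided I would rerun the construction with $A$ viewed as a \emph{right} $A$-module --- equivalently, inside $A^{\mathrm{op}}$ with the flipped diagonal $\sum_i d_i \otimes c_i$ --- to obtain an idempotent $e'$ with $L = e'A$. Since $e \in e'A$ forces $e'e = e$ while $e' \in Ae$ forces $e'e = e'$, one concludes $e = e'$ and therefore $L = Ae = eA$. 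For any $a \in A$, $ae \in eA$ gives $e(ae) = ae$ and $ea \in Ae$ gives $(ea)e = ea$; both sides equal $eae$, so $ae = ea$ and $e$ is central.
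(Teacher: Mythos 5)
Your proposal is correct, and for the first two assertions it coincides with the paper's proof: the same averaging projection $Q(v)=\sum_i c_iP(d_iv)$, the same verification via the diagonal identity evaluated through a bounded bilinear map on $A\pt A$, and the same idempotent $e=Q(1)$ with $L=Ae$. Where you diverge is the centrality claim. The paper argues directly with the element $e$: since $L$ is two-sided, $ex=exe$ for all $x$, and then a chain of identities $e=\sum ec_id_i=\sum ec_ied_i=\sum c_ied_ie=\sum c_ied_i$ (each step justified by applying a bounded map to the diagonal identity) exhibits $e$ in the form $\sum_i c_ied_i$, which commutes with every $a\in A$ again by the diagonal identity. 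You instead rerun the whole construction for the right module structure --- correctly observing that the flipped tensor $\sum_i d_i\otimes c_i$ is a diagonal for $A^{\mathrm{op}}$ --- to produce a second idempotent $e'$ with $L=e'A$, and then deduce $e=e'$ and centrality by purely algebraic manipulation of the two-sided generator. Both routes are sound; the paper's is shorter and stays with a single idempotent, while yours avoids the slightly fiddly termwise-looking computation (which, as you rightly stress for part one, must really be read as evaluating bounded maps on elements of $A\pt A$) at the cost of invoking the construction twice.
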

\begin{proof}
Out of a bounded linear projection $P$ from $V$ onto $W$, one obtains
a left $A$-module projection $Q$, by $Q\colon x\mapsto\sum c_iP(d_ix)$.
Now, let $V=A$ and $W=L$. Then, $e=Q(1)$ is an idempotent such that $L=Ae$.
When $L$ is a two-sided ideal, one has $ex=exe$ for every $x\in A$ and
$e=\sum ec_id_i=\sum ec_ied_i=\sum c_ied_ie=\sum c_ied_i$, which is central.
\end{proof}

Thus, every finite-dimensional representation of a contractible Banach
algebra $A$ factors through $A\to Ae$ where $e$ is a central idempotent
in $K_r(A)$. In particular, if an infinite-dimensional
contractible Banach algebra $A$ exists, then $A/K_r(A)$ is a contractible
Banach algebra having no finite-dimensional representations. Further taking
a quotient by a maximal ideal, one obtains an infinite-dimensional quotient
of $A$ which is simple.

Recall that a Banach space $V$ is said to have the
\emph{approximation property}, or AP in short, if there is a net $(\p_k)$ of
finite-rank operators on $V$ which converges to $\id_V$ uniformly on compact
subsets. See~\cite{handbook}.

\begin{prop}
Let $A$ be a contractible Banach algebra and $V$ a left $A$-module
with the AP. Then, $K_r(A)V$ is dense in $V$. In particular,
$A\ni a\mapsto av\in V$ is compact for every $v\in V$.
\end{prop}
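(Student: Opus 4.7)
The plan is to show that $v\in\overline{K_r(A)V}$ for every $v\in V$ by a finite-rank approximation on the $V$-side. Fix a diagonal $\sum c_i\otimes d_i$ and consider
\[
\xi=\sum c_i\otimes d_iv\in A\pt V,
\]
so that the module contraction $m_V\colon A\pt V\to V$, $a\otimes w\mapsto aw$, sends $\xi$ to $v$. Let $(\p_k)$ be a net of finite-rank operators on $V$ realizing the AP. A standard rescaling of projective-tensor representatives produces $\xi=\sum\alpha_n\otimes w_n$ with $\sum\|\alpha_n\|<\infty$ and $w_n\to0$; since $\{w_n\}\cup\{0\}$ is then compact, $\sup_n\|\p_k(w_n)-w_n\|\to0$, which forces $(\id_A\pt\p_k)(\xi)\to\xi$ in $A\pt V$. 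Applying $m_V$ and expanding with the original representation of $\xi$,
\[
m_V\bigl((\id_A\pt\p_k)(\xi)\bigr)=\sum_i c_i\p_k(d_iv)\longrightarrow v\quad\text{in }V.
\]

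Write $\p_k=\sum_j g_{k,j}(\cdot)w_{k,j}$ in finite-rank form. Rearranging absolutely convergent sums,
\[
\sum_i c_i\p_k(d_iv)=\sum_j\eta_{k,j}\,w_{k,j},\qquad\eta_{k,j}:=\sum_i g_{k,j}(d_iv)c_i\in A.
\]
The main obstacle is to verify $\eta_{k,j}\in K_r(A)$; here the trace identity $\sum ac_i\otimes d_i=\sum c_i\otimes d_ia$ does the work. Applying $\id_A\otimes\psi$ with $\psi(b):=g_{k,j}(bv)$, one gets
\[
a\eta_{k,j}=\sum_i g_{k,j}(d_iav)c_i=R(av),
\]
where $R\colon V\to A$ is the bounded map $w\mapsto\sum_i g_{k,j}(d_iw)c_i$. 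Its truncations $R_N$ are finite-rank and satisfy $\|R-R_N\|\le\|g_{k,j}\|\sum_{i>N}\|c_i\|\|d_i\|\to0$, so $R$ is compact. Hence $a\mapsto a\eta_{k,j}=R(av)$ is a bounded map followed by a compact one, and $\eta_{k,j}\in K_r(A)$.

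Thus each $\sum_j\eta_{k,j}w_{k,j}$ lies in $K_r(A)V$ and converges to $v$, establishing the density. The ``in particular'' is then immediate: for $x\in K_r(A)$ and $w\in V$, $a\mapsto a(xw)=(ax)w$ is a compact map composed with the bounded module action, hence compact; and $v\mapsto(a\mapsto av)$ is norm-continuous from $V$ into the operator space, so it carries the closure $\overline{K_r(A)V}=V$ into the compact operators.
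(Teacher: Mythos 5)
Your proof is correct and follows essentially the same route as the paper: rescale the diagonal so that $\sum\|c_i\|<\infty$ and $\|d_i\|\to0$, approximate $v$ by $\sum_i c_i\p_k(d_iv)$ using uniform convergence of $\p_k$ on the compact set $\{d_iv\}\cup\{0\}$, expand $\p_k$ in finite-rank form, and use the trace identity $\sum ac_i\otimes d_i=\sum c_i\otimes d_ia$ plus the summability of $\sum\|c_i\|\|d_i\|$ to place the resulting coefficients in $K_r(A)$. The only cosmetic difference is that you verify compactness of $a\mapsto a\eta_{k,j}$ by factoring it as $R\circ(a\mapsto av)$ with $R$ a norm limit of finite-rank maps, whereas the paper exhibits $a\mapsto ax$ directly as a norm limit of finite-rank operators on $A$.
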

\begin{proof}
Let $v\in V$. Take a net $(\p_k)$ of finite-rank operators on $V$
as above.
We may assume that $\sum\|c_i\|<+\infty$ and $\|d_i\|\to0$.
Thus $v_k:=\sum_i c_i\p_k(d_iv)\to\sum_i c_id_iv=v$ as $k$ tends to infinity.
Fix $k$ and expand $\p_k$ as $\sum_j \xi_j\otimes w_j\in V^*\otimes_{\mathrm{alg}} V$.
Then, $v_k=\sum_j \bigl(\sum_i c_i\xi_j(d_iv)\bigr)w_j$.
We observe that $x:=\sum_i c_i\eta(d_i)\in K_r(A)$ for every $\eta\in A^*$.
Indeed, this follows from the fact
$ax=\sum_i c_i\eta(d_ia)=\lim_n \sum_{i=1}^n c_i\eta(d_ia)$,
where the convergence is uniform for $a$ in the unit ball of $A$.
Therefore, $\sum_i c_i\xi_j(d_iv)\in K_r(A)$ for every $j$, and $v_k\in K_r(A)V$.
\end{proof}

\begin{cor}[\cite{taylor}]
Every contractible Banach algebra with the AP is finite-dimensional.
\end{cor}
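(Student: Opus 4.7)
The plan is to apply the preceding Proposition to the regular left module $V:=A$ itself, with the action given by left multiplication. Since the AP is a Banach-space property of $A$, the hypotheses of the Proposition are met automatically by the assumption that $A$ has the AP.

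Of the two conclusions the Proposition delivers, the shortest route is via the second: for every $v\in V$, the map $A\ni a\mapsto av$ is compact. Taking $v=1$---available because contractible Banach algebras are unital by definition---this map is simply $\id_A$. So the identity operator on $A$ is compact, which forces $A$ to be finite-dimensional.

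If one instead prefers to stay on the side of ideals, the first conclusion of the Proposition gives that $K_r(A)V=K_r(A)\cdot A$ is dense in $A$. Since $A$ is unital, $K_r(A)\supseteq K_r(A)\cdot 1$ already exhausts $K_r(A)\cdot A$, and $K_r(A)$ is closed, so $K_r(A)=A$. The characterization recorded just before the Lemma---namely that a unital Banach algebra is finite-dimensional iff $K_r(A)=A$---then closes the argument.

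There is essentially no obstacle here: once one thinks to apply the Proposition to $V=A$ itself, the corollary is a one-line deduction. The only point to verify is the (immediate) observation that $A$ has the AP as a Banach space exactly when the left regular module $V=A$ has the AP, so the Proposition indeed applies.
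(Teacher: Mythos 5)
Your proof is correct and is exactly the intended deduction: the paper leaves the corollary without proof precisely because it follows by applying the Proposition to the left regular module $V=A$, where taking $v=1$ shows $\id_A$ is compact (equivalently, $K_r(A)=K_r(A)\cdot A$ is closed and dense, so $K_r(A)=A$). Both of your routes are valid and match the paper's setup.
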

\begin{cor}
Every contractible Banach algebra $A$ with $K_r(A)=0$
(e.g., $A$ is simple and infinite-dimensional) does not act
on a non-zero Banach space with the AP.
\end{cor}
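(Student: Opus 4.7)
The plan is to derive the corollary as a more-or-less immediate consequence of the proposition proved just above, handling the main statement and the parenthetical example in turn.

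For the main statement, I would argue by contradiction. Suppose $V$ is a non-zero left $A$-module with the AP. The proposition then asserts that $K_r(A)V$ is dense in $V$. But the hypothesis $K_r(A)=0$ collapses this to the statement that $\{0\}$ is dense in $V$, forcing $V=\{0\}$ and contradicting the assumption.

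For the parenthetical example, I need to verify that a simple, infinite-dimensional, contractible Banach algebra $A$ automatically has $K_r(A)=0$. Since $K_r(A)$ is a closed two-sided ideal of $A$, simplicity forces $K_r(A)\in\{0,A\}$. The case $K_r(A)=A$ is ruled out by the fact, recorded earlier in the excerpt, that $K_r(A)=A$ characterises finite-dimensionality among unital Banach algebras. Hence $K_r(A)=0$, and the main statement applies.

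There is no real obstacle here; all the substance lies in the proposition. The only point meriting a line of explanation is the tacit convention that when $A$ ``acts'' on $V$ one means a \emph{unital} module action, so that $1\cdot v=v$. This is the setting of the proposition, whose proof uses the identity $\sum_i c_id_iv=v$, and without this convention both the proposition and the corollary would be trivialised by the zero action on any Banach space.
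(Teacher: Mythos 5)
Your argument is correct and is exactly the intended deduction: the paper states this corollary without proof as an immediate consequence of the preceding proposition, and your two steps (density of $K_r(A)V=\{0\}$ forces $V=0$; simplicity plus the characterisation of finite-dimensionality via $K_r(A)=A$ forces $K_r(A)=0$) are the natural way to fill it in. The remark about the unital convention is a reasonable point of care and does not change the substance.
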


A classical theorem of Bernard and Cole states that if $A\subset\cB(H)$ is a
Banach subalgebra of bounded linear operators on a Hilbert space $H$ and
$I\subset A$ is a closed two-sided ideal, then the quotient Banach algebra $A/I$
is again (isometrically isomorphic to) a Banach subalgebra of $\cB(\tilde{H})$
for some Hilbert space $\tilde{H}$. Since Hilbert spaces have the AP, this fact
together with the above results recovers Paulsen and Smith's theorem
(\cite{paulsen-smith}) that every contractible Banach algebra acting on
a Hilbert space is finite-dimensional. On the other hand, it is not known
whether Bernard and Cole's theorem extends to, for example, $L_p$-spaces
(see~\cite{lemerdy}). Thus, it requires an additional work to extend
Paulsen and Smith's theorem from a Hilbert space to an $L_p$-space.
Recall that a Banach space $V$ has the
\emph{uniform Grothendieck approximation property}, or UGAP in short,
if every ultrapower of $V$ has the AP (see~\cite{handbook}).
All $L_p$-spaces $(1\le p\le\infty)$
have the UGAP. More generally, UGAP-valued $L_p$-spaces still have the UGAP.

\begin{thm}
Let $A$ be a contractible Banach algebra and $V$ a left $A$-module
with the UGAP. Then, the image of $A$ in $\cB(V)$ is finite-dimensional.
\end{thm}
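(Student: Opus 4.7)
My plan is in three stages. First, I would reduce to the case where $A$ is a closed subalgebra of $\cB(V)$: letting $\pi\colon A\to\cB(V)$ be the module action and $B=\overline{\pi(A)}$ its norm closure, the image of the diagonal $\sum c_i\otimes d_i$ under $\pi\pt\pi$ lies in $B\pt B$ and, by continuity from the diagonal identities on the dense subalgebra $\pi(A)\subset B$, serves as a diagonal for $B$. So $B$ is itself contractible; and since finite-dimensional linear subspaces of $\cB(V)$ are automatically norm-closed, $\pi(A)$ is finite-dimensional iff $B$ is. I may therefore assume from the outset that $A\subset\cB(V)$ is already a closed contractible subalgebra.

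Second, I would pass to an ultrapower. Fix a free ultrafilter $\cU$ on a sufficiently large index set; by UGAP the ultrapower $V^{\cU}$ has the AP. The canonical inclusion $\cB(V)\hookrightarrow\cB(V^{\cU})$ is isometric, so $V^{\cU}$ is a left $A$-module with operator norms preserved. Applying the Proposition to this module yields that $K_r(A)V^{\cU}$ is dense in $V^{\cU}$ and that for every $\tilde v\in V^{\cU}$ the orbit map $T_{\tilde v}\colon A\to V^{\cU}$, $a\mapsto a\tilde v$, is a compact operator.

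Finally, I would try to produce a vector $\tilde v_0\in V^{\cU}$ for which $\|a\tilde v_0\|_{V^{\cU}}\ge c\|a\|_A$ for all $a\in A$ and some $c>0$. Granted such a $\tilde v_0$, the compactness of $T_{\tilde v_0}$ together with this lower bound forces the unit ball of $A$ to be relatively compact in operator norm, so $A$ is finite-dimensional by Riesz's theorem. The construction of $\tilde v_0$ is the main obstacle and is exactly where UGAP, rather than merely the AP of $V$, should come into play: for any finite subset $F\subset A$ and $\epsilon>0$ one can choose individual unit-norm witnesses $v_a\in V$ with $\|av_a\|\ge(1-\epsilon)\|a\|_A$ for each $a\in F$, and the tuple $(v_a)_{a\in F}\in V^{|F|}$ (with the max norm) simultaneously witnesses the norms of every element of $F$; the challenge is to internalize such finite direct-sum witnesses as a single element of an ultrapower of $V$ itself. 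A plausible implementation is a directed-ultrafilter argument indexed by the pairs $(F,\epsilon)$, exploiting that UGAP is preserved under iterated ultrapowers so that the Proposition's compactness conclusion is available throughout.
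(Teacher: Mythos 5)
Your first two stages are sound and agree with the paper's setup: the reduction to a closed subalgebra $A\subset\cB(V)$ is exactly the paper's ``we may assume $A$ is a Banach subalgebra of $\cB(V)$,'' and applying the Proposition to the left $A$-module $V^\cU$ is legitimate. The gap is precisely where you locate it, and it is not a technicality: the vector $\tilde v_0$ you want does not exist in general, and your proposed construction cannot produce it. An element of $V^\cU$ is represented by a family of \emph{single} vectors of $V$, with $\|a\tilde v_0\|=\lim_\cU\|av_i\|$; your witnesses $(v_a)_{a\in F}$ live in the finite $\ell_\infty$-sum $\ell_\infty^{|F|}(V)$, and an ultraproduct of such sums is not an ultrapower of $V$, so there is nothing to ``internalize.'' Worse, the witness-gluing argument uses neither contractibility nor infinite-dimensionality, so if it worked it would apply to $A=M_2$ acting on the Hilbert space $\ell_2^2$ (contractible, UGAP): but every unit vector $\tilde v_0$ there (and every ultrapower of $\ell_2^2$ is $\ell_2^2$ itself) is killed by a norm-one rank-one operator, so no $c>0$ admits $\|a\tilde v_0\|\ge c\|a\|$ for all $a\in A$. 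Likewise the diagonal projections $e_{nn}$ on $\ell_2$ satisfy $\sum_n\|e_{nn}\tilde v_0\|^2\le\|\tilde v_0\|^2$ in any ultrapower, so no single vector norms them all. The key lemma of your third stage is therefore false as stated, and the strategy cannot be completed without an essentially new idea.

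The paper avoids ever needing a globally norming vector: it norms one operator at a time and works on the dual side. Assuming $A\subset\cB(V)$ is infinite-dimensional, it chooses a \emph{weakly null} net $(a_n)$ of norm-one elements of $A$, unit vectors $v_n$ with $\|a_nv_n\|\to1$, and lets $w\in V^\cU$ be the unit vector represented by $(a_nv_n)$. Since a compact operator sends a bounded weakly null net to a norm null net, $xw=0$ for every $x\in K_l(A)$, whence $(V^\cU)^*K_l(A)\subset\{w\}^\perp$, a proper closed subspace. On the other hand, by Heinrich's theorem the UGAP of $V$ gives the AP for $(V^\cU)^*$, and the Proposition applied to the \emph{right} $A$-module $(V^\cU)^*$ says $(V^\cU)^*K_l(A)$ is dense --- a contradiction. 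Note that the ultrapower enters through its dual and through $K_l(A)$ rather than $K_r(A)$; if you want to salvage your outline, you should replace the search for $\tilde v_0$ by an argument of this kind.
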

\begin{proof}
Assume that the conclusion does not hold.
We may assume that $A$ is an infinite-dimensional Banach subalgebra of $\cB(V)$ and
choose a weakly null net $(a_n)_{n\in I}$ in $A$ such that $\|a_n\|=1$ for all $n$.
Let $\cU$ be a cofinal ultrafilter on the index set $I$ and consider
the ultrapower $V^\cU$ of $V$. Then, $A$ acts on $V^\cU$ diagonally.
It induces a \emph{right} $A$ action on the dual Banach space $(V^\cU)^*$,
given by $(\xi a)(v)=\xi(av)$ for $\xi\in (V^\cU)^*$, $a\in A$ and $v\in V^\cU$.
Since $V$ has the UGAP, $(V^\cU)^*$ has the AP by Heinrich's theorem.
Indeed, this follows from the fact that $(V^\cU)^{**}$ has the UGAP
(see Theorem 9.1 and Proposition 6.7 in \cite{heinrich}).
Choose a net $(v_n)$ of unit vectors in $V$ such that $\|a_n v_n\|\to1$
and let $w\in V^\cU$ be the unit vector represented by $(a_nv_n)$.
Since for every compact \emph{left} multiplier $x\in K_l(A)$
the net $(xa_n)$ is norm null, one has $K_l(A)w=0$ and
$(V^\cU)^*K_l(A)\subset \{w\}^\perp$.
However, this is in contradiction with the above proposition applied to
the right $A$-module $(V^\cU)^*$, which reads that $(V^\cU)^*K_l(A)$
is dense in $(V^\cU)^*$.
\end{proof}

\subsection*{Acknowledgment}
This research was carried out while the author was visiting at
the Institut Henri Poincar\'e (IHP) for the Program 
``von Neumann algebras and ergodic theory of group actions,''
or having lunch with Nicolas Monod at a salad bar.
The author gratefully acknowledges the kind hospitality and
stimulating environment provided by IHP and the program organizers.
He would like to thank N. Monod for encouraging him to work on the contractible 
Banach algebra problem and G.~Pisier for drawing
his attention to the paper~\cite{paulsen-smith}.
Research partially supported by IHP, JSPS and Sumitomo Foundation.

\end{document}